  \providecommand\BibTeX{{%
    \normalfont B\kern-0.5em{\scshape i\kern-0.25em b}\kern-0.8em\TeX}}}
\newtheorem{rem}{remark}
\begin{document}

\title{Unconstraint minimization of continuous convex functions. Application to LP}

\author{Beniamin Costandin}
\email{bcostandin@yahoo.com}
\affiliation{ \institution{Technical University of Cluj Napoca}}

\author{Marius Costandin}
\email{costandinmarius@gmail.com}
\affiliation{ \institution{General Digits}}

\author{Petru Dobra}
\email{petru.dobra@aut.utcluj.ro}
\affiliation{ \institution{Technical University of Cluj Napoca}}

\begin{abstract}
Our contribution in this paper is two folded. We consider first the case of linear programming with real coefficients and give a method which 
allows the computation of a new upper bound on the distance from the origin to a feasible point. Next we present an application of the ellipsoid
method to form a novel algorithm which allows one to search for the continuous convex functions minimum without the prior knowledge on the 
search radius. If stopped early, the proposed algorithm can give proofs that the optimal value has not been reached yet, hence the user can opt
for more iterations. However, if prior guarantees exist on the existence of an optimum point in a given ball, the algorithm is guaranteed to find it.
For such a case we prove a polynomial upper bound on the number of flops required to obtain the solution. The presented algorithm is then 
applied to linear programming. We further develop our algorithm and provide a method to answer the feasibility question of linear programs with real coefficients in a 
number of flops bounded above by a polynomial in the number of variables and the number of constraints. However, in case of feasibility, our method does not generate an actual feasible point. For obtaining such a point, we  have to make some assumptions on the subgradients of a certain function. 
\end{abstract}

%

\keywords{continuous convex optimization, ellipsoid algorithm, linear programming}


\maketitle

\section{Introduction}

The paper is basically composed out of three parts: the first part introduces a novel approach to study the feasibility of linear
 programs (and convex optimization in general). The feasibility is asserted by analyzing the minimum of a certain function. 
 In this framework we are able to formulate a novel idea which allows one to obtain an upper bound on the magnitude of a 
 feasible point (if exists) for linear programs. The basic idea is that the function that we associate to the system of 
 inequalities is strictly positive for any unfeasible point and negative for any feasible point. Our reasoning is that if its 
 minimum is greater than zero, then the system is not feasible. We are next able to assert analyzing the function minimum that
 a feasible point has to be in a ball centered at origin and of some radius which depends on the problem data. For so 
 doing, we use some results from fixed point theory. However, our given proof is valid only for a smooth approximation of our function.
 More details
 can be obtain in the below lines. The full proof is left for future work since it seems to involve deeper results of fixed 
 point theory, which are not in the scope of this paper. 

Since the presented theory relies on the minimization of a certain 
 continuous (not necessarily differentialbe) function, we present in the subsequent section an algorithm based on the ellipsoid 
 method. Our formulation is adapted to the "needs" of our method for linear programming presented earlier, although it can be used to
  minimize any continuous convex function. We also present some 
 other general advantages and disadvantages 
 of our formulation. Still in the same section, a merge between a slow but steady converging method (the ellipsoid method) and a fast,
 not always converging method (the pattern search) is proposed. Basically this would be a deep-cut ellipsoid method where the depth 
 of the cut is decided after a computationally cheap exploratory step. 
Finally we present an application of our proposed minimization algorithm to the feasibility theory linear programs. Here is where we 
obtain an upper bound on the search radius. This upper bound depends on the problem data (with real coefficients). 
The next section are the conclusions and future work.   

\section{Main results}
We give two results in this paper: a presentation of a feasibility study method for linear programs and an algorithm for minimizing
 continuous convex functions. We combine the two in the next section regarding applications. In case the reader is interested in the
minimization algorithm only, he (she) can safely skip the following subsection. 
\subsection{Feasibility of linear programs}
In this subsection we give a short algorithm which can be used to solve linear programs. The algorithm requires minimization of a 
continuous convex function. Let us consider the feasibility problem of a system of $m \in \mathbb{N}$ linear inequalities. We are interested
 if $\exists X \in \mathbb{R}^{n}$ such that 
\begin{align}\label{E2.1}
A_k^T \cdot X + b_k \leq 0 \hspace{0.5cm} \forall k \in \{1, \hdots, m\}
\end{align} where $A_k \in \mathbb{R}^{n \times 1}$ with $\|A_k\| > 0$ and $b_k \in \mathbb{R}$. We write (\ref{E2.1}) as 
\begin{align}
\begin{bmatrix} A_1^T \\ \vdots \\ A_m^T\end{bmatrix} \cdot X + \begin{bmatrix}b_1 \\ \vdots \\ b_m\end{bmatrix} &\preceq 
\begin{bmatrix}0 \\ \vdots \\0 \end{bmatrix} \nonumber \\
A \cdot X + B &\preceq 0
\end{align} Let us denote with $\mathcal{P}$ the set of all feasible points and define the function:

\begin{align}
f(X) = \max_{k \in \{1, \hdots, m\}} \left( A_k^T \cdot X + b_k \right)
\end{align} Let us now acknowledge some of the properties of the function $f$
\begin{enumerate}
\item It is convex: indeed, let $X,Y \in \mathbb{R}^n$ and $\alpha, \beta > 0$ $\alpha + \beta = 1$ then
\begin{align}
f(\alpha \cdot X + \beta \cdot Y ) &= \max_{k \in \{1, \hdots, m\}} A_k^T \cdot (\alpha \cdot X + \beta \cdot Y) + b_k \nonumber \\
& = \max_{k \in \{1, \hdots, m\}} \alpha \cdot \left (A_k^T \cdot X   + b_k  \right) + \beta \cdot \left( A_k^T \cdot Y + b_k \right) \nonumber \\
& \leq \max_{k \in \{1, \hdots, m\}} \alpha \cdot \left (A_k^T \cdot X   + b_k  \right) + \nonumber \\
 & + \max_{k \in \{1, \hdots, m\}} \beta \cdot \left (A_k^T \cdot X   + b_k  \right) \nonumber \\
& \leq \alpha \cdot f(X) + \beta \cdot f(Y)
\end{align}
\item It is continuous: indeed it can be proven that given two continuous functions $h,g : \mathbb{R}^n \to \mathbb{R}$ the function
\begin{align} 
f(X) = \max \{g(X), h(X) \}
\end{align} is continuous by simply observing that 
\begin{align}
f &= g + \max \{ h-g, 0\} \nonumber \\
& =  g + \frac{h-g + |h-g|}{2} = \frac{g + h + |g - h|}{2}
\end{align} is continuous. Next one can use the above to prove that 
\begin{align}
f(X) &= \max_{k} \{ f_1(X), \hdots, f_m(X)\} \nonumber \\
&=  \max \left\{ \max_k \{ f_1(X), \hdots, f_{m-1}(X)\}, f_m(X) \right\}
\end{align} is continous given the fact that $f_k$ is so.
\item It has strictly positive lower bounds on the magnitude of the subgradient at any (non-feasible) point. More on this in the next section. 
\end{enumerate} 

Next, let us define
\begin{align}
X^{\star} = \mathop{\text{argmin}}_{X \in \mathbb{R}^n} \hspace{0.1cm} f(X)
\end{align} We have the following observations regarding the above:

\begin{lemma}\label{L1.1}
The following is true:
\begin{align}
\mathcal{P} \neq \emptyset \iff f(X^{\star}) \leq 0
\end{align} 
\end{lemma}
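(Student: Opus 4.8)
The plan is to prove the two implications separately, both of which follow almost immediately from the definitions, since by construction $f(X) \leq 0$ holds precisely when $X$ satisfies all $m$ inequalities simultaneously. For the direction $\mathcal{P} \neq \emptyset \Rightarrow f(X^{\star}) \leq 0$, I would take any feasible point $X_0 \in \mathcal{P}$. Feasibility means $A_k^T X_0 + b_k \leq 0$ for all $k$, so the maximum over $k$ obeys $f(X_0) = \max_k (A_k^T X_0 + b_k) \leq 0$. Since $X^{\star}$ minimizes $f$ over $\mathbb{R}^n$, I conclude $f(X^{\star}) \leq f(X_0) \leq 0$.

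For the converse $f(X^{\star}) \leq 0 \Rightarrow \mathcal{P} \neq \emptyset$, I would simply read off the definition of the maximum: the inequality $f(X^{\star}) \leq 0$ says $\max_k (A_k^T X^{\star} + b_k) \leq 0$, and a maximum is $\leq 0$ only if each of its arguments is, so $A_k^T X^{\star} + b_k \leq 0$ for every $k$. Hence $X^{\star}$ is itself feasible, giving $X^{\star} \in \mathcal{P}$ and in particular $\mathcal{P} \neq \emptyset$.

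The only point that is not a pure unwrapping of definitions, and hence the main thing to address, is the existence of the minimizer $X^{\star}$ that the notation $f(X^{\star})$ presupposes. As a pointwise maximum of affine functions $f$ is convex and continuous but need not be coercive, so a priori its infimum could be $-\infty$. I would handle this by splitting on the value of $\inf_X f$: if $\inf_X f = -\infty$, then there exist points $X$ with $f(X) < 0$, which are feasible, so $\mathcal{P} \neq \emptyset$ while the stated inequality holds in the limiting sense; if $\inf_X f$ is finite, then since $f$ is polyhedral the infimum is attained at some $X^{\star}$ and the two arguments above apply verbatim. Thus interpreting $f(X^{\star})$ as $\inf_X f(X)$ (which is attained whenever finite for a max of affine functions) makes the equivalence hold with no hidden assumption, and everything else is immediate.
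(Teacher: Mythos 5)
Your proof is correct and takes essentially the same route as the paper's: a feasible point $X_0$ gives $f(X_0) \leq 0$ and hence $f(X^{\star}) \leq 0$ by minimality, while conversely $f(X^{\star}) \leq 0$ forces $A_k^T X^{\star} + b_k \leq 0$ for every $k$, so $X^{\star} \in \mathcal{P}$. Your additional treatment of whether the minimizer $X^{\star}$ exists at all (splitting on $\inf_X f = -\infty$ versus finite, and invoking that a pointwise maximum of finitely many affine functions attains a finite infimum) is a point of rigor the paper silently skips, but it does not change the approach.
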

\begin{proof} 
First let us assume that $\mathcal{P} \neq \emptyset$, than for all $X \in \mathcal{P} $ follows $f(X) \leq 0$, hence $f(X^{\star}) \leq 0$.
Next, assume that $f(X^{\star}) \leq 0$ hence $A_k^T\cdot X^{\star} +b_k \leq 0$, that is $X^{\star} \in \mathcal{P}$.
\end{proof}

\begin{rem} A similar Lemma to \ref{L1.1} can be given to characterize the intersection of convex sets.
\end{rem}

\begin{rem} In order to assert the feasibility of $\mathcal{P}$ is it enough to obtain a point $X$ for which $f(X) \leq 0$, hence the same result, 
as in Lemma \ref{L1.1}, can be obtained while working with the function $\underline{f}(X) = \max \{f(X), 0\}$. 
\end{rem}

While attempting to find $X^{\star}$, one has to start with an initial point, $X_0$. In case $X_0 = 0$ then $f(X_0) \leq \| B \|$. Moreover, 
we know that the norm of any subgradient of $f$ is bounded below by some strictly positive quantity if $f(\cdot) > 0$. We want to use this to obtain an initial
radius $R$ such that $X^{\star} \in B(0,R)$. For this we give the following two lemmas:
\begin{lemma} \label{L1.2}
Let $f:\mathbb{R}^n \to \mathbb{R}$ be a continuous convex function and $B(X_0,R)$ be a ball in $\mathbb{R}^n$ centered at $X_0 \in \mathbb{R}^n$
 and of radius $R > 0$. Then if $\exists \underline{d} > 0$ such that $\forall X \in B(X_0,R)$ one has $\| d_X\| \geq \underline{d}$
for all $ d_X \in \partial f (X)$ then $\exists X \in B(X_0, R)$ and  $ d_X \in \partial f(X)$ such that
\begin{align}
X - X_0  = -\frac{R}{2}  \cdot  \frac{d_X}{\| d_X \|}
\end{align}   
\end{lemma}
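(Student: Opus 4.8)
The plan is to read the displayed identity as a fixed point condition. Rewriting the target equation as $X = X_0 - \frac{R}{2}\cdot\frac{d_X}{\|d_X\|}$ shows that any solution automatically satisfies $\|X - X_0\| = \frac{R}{2}$, so the sought point must lie on the sphere of radius $\frac{R}{2}$ about $X_0$. I would therefore introduce the candidate self-map
\begin{align}
T(X) = X_0 - \frac{R}{2}\cdot\frac{d_X}{\|d_X\|}, \qquad d_X \in \partial f(X),
\end{align}
on the compact convex domain $\overline{B}\left(X_0,\frac{R}{2}\right)$, and try to produce a fixed point. The normalization is legitimate because $\|d_X\| \geq \underline{d} > 0$ holds throughout $B(X_0,R)$, hence on $\overline{B}\left(X_0,\frac{R}{2}\right)$, so no division by zero occurs; moreover $T$ takes values on the sphere $\|X-X_0\| = \frac{R}{2}$, so it genuinely maps $\overline{B}\left(X_0,\frac{R}{2}\right)$ into itself.

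First I would treat the differentiable case, where $\partial f(X) = \{\nabla f(X)\}$ is a singleton and $T$ is an honest single-valued map. There $X \mapsto \nabla f(X)$ is continuous and bounded away from zero, so $X \mapsto \nabla f(X)/\|\nabla f(X)\|$ is continuous and $T$ is a continuous self-map of a compact convex set. Brouwer's fixed point theorem then yields $X$ with $T(X) = X$, which is exactly the claimed identity with $d_X = \nabla f(X)$.

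The hard part is the genuinely nonsmooth case, and I expect it to be the main obstacle. Here $\partial f$ is set-valued, and although $X \mapsto \partial f(X)$ is upper semicontinuous with nonempty compact convex values (standard for finite convex functions), the radial normalization $d \mapsto d/\|d\|$ destroys convexity: the set $\left\{d/\|d\| : d \in \partial f(X)\right\}$ is in general a curved patch on the sphere, not a convex set. Consequently the hypotheses of Kakutani's theorem are not met by $T$ directly, which is precisely the obstruction forcing the smooth-approximation route alluded to in the introduction. Closing this gap cleanly seems to require a homotopy or degree argument, or a continuous selection from $\partial f$, rather than an off-the-shelf fixed point theorem.

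A route that bypasses the fixed point machinery altogether, and which I would use to settle the general convex case, is constrained minimization. Let $X^{\star}$ minimize $f$ over the compact set $\overline{B}\left(X_0,\frac{R}{2}\right)$; such a minimizer exists by Weierstrass. If $X^{\star}$ were interior, optimality would force $0 \in \partial f(X^{\star})$, contradicting $\|d_X\| \geq \underline{d} > 0$, so necessarily $\|X^{\star}-X_0\| = \frac{R}{2}$. The first-order optimality condition $0 \in \partial f(X^{\star}) + N(X^{\star})$, valid here since $f$ is finite and continuous so the subdifferential sum rule applies, together with the normal cone $N(X^{\star}) = \left\{\lambda(X^{\star}-X_0) : \lambda \geq 0\right\}$, produces $d_{X^{\star}} \in \partial f(X^{\star})$ and $\lambda \geq 0$ with $d_{X^{\star}} = -\lambda(X^{\star}-X_0)$. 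Since $\|d_{X^{\star}}\| \geq \underline{d} > 0$ we get $\lambda > 0$, and substituting $\lambda = 2\|d_{X^{\star}}\|/R$ gives $X^{\star} - X_0 = -\frac{R}{2}\cdot\frac{d_{X^{\star}}}{\|d_{X^{\star}}\|}$, as required.
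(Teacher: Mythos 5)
Your proposal is correct, and its final argument goes strictly beyond what the paper itself establishes. The paper's proof is explicitly only a sketch: it treats the case $X_0=0$, $R=1$ under the additional assumption that $f$ has continuous gradients, applies Brouwer's fixed point theorem to the map $T(X) = -\frac{1}{2}\,\nabla f(X)/\|\nabla f(X)\|$ exactly as in your second paragraph, and defers the nonsmooth case to future work (suggesting smoothing or Kakutani). Your diagnosis of why Kakutani does not apply off the shelf --- the radial normalization $d \mapsto d/\|d\|$ destroys convexity of the values of the set-valued map --- is a sharper account of the obstruction than the paper gives. More importantly, your closing argument via constrained minimization settles the general case completely: minimizing $f$ over $\overline{B}\left(X_0,\frac{R}{2}\right)$, ruling out an interior minimizer because it would force $0 \in \partial f(X^{\star})$ against the hypothesis $\|d_X\| \geq \underline{d} > 0$, and then invoking $0 \in \partial f(X^{\star}) + N(X^{\star})$ with the normal cone of the ball at a boundary point is a valid chain of reasoning; the subdifferential sum rule holds without qualification issues since $f$ is finite and continuous on all of $\mathbb{R}^n$, and solving $d_{X^{\star}} = -\lambda (X^{\star}-X_0)$ with $\lambda = 2\|d_{X^{\star}}\|/R > 0$ yields exactly the claimed identity, with $X^{\star} \in B(X_0,R)$ since $\|X^{\star}-X_0\| = \frac{R}{2} < R$. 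What each approach buys: the paper's fixed-point view is geometrically suggestive and motivates the smoothing program sketched in its conclusion, but it leaves the lemma unproven as stated; your optimality-condition route needs no fixed point theory, covers arbitrary continuous convex $f$, and in fact closes the gap that the paper lists among its open problems.
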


\begin{proof} Sketch: we will only give a proof for the easier case when $X_0 = 0$, $R = 1$ and $f$ has \textbf{continuous gradients} on $B[0,1]$. Let
$T : B[0,1] \to B[0,1]$ with 
\begin{align}
T(X) = -\frac{\nabla f(X)}{\| \nabla f(X)\|} \cdot \frac{1}{2} \in B[0,1]
\end{align} therefore, according to Brouwer's fixed point theorem, $\exists X $ such that 
\begin{align}
X = -\frac{\nabla f(X)}{ \| \nabla f (X)\|} \frac{1}{2}
\end{align} 

For the more general case one can probably work with the fact that the non-differentiable $f$ can be approximated by a differentiable one 
by some smoothing techniques. This is however, left for future work. Alternatively, the application of Kakutani fixed-point theorem should also
be considered.
\end{proof}

\begin{lemma}\label{L1.3}
Let $X_0 \in \mathbb{R}^n$ and $R > 0$ such that for all $X \in B(X_0,R)$ one has 
\begin{enumerate}
\item $0 \not\in \partial f(X)$. 
\item $\forall d_X \in \partial f (X)$ one has $\|d_X\| \geq \underline{d} > 0$
\end{enumerate} Then $ \exists X \in B(0,R)$
\begin{align}
f(X) \leq f(X_0) - \underline{d} \cdot \frac{R}{2}
\end{align}
\end{lemma}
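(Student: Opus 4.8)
The plan is to reduce the statement directly to Lemma~\ref{L1.2} together with the defining inequality of a subgradient. Hypothesis (2) is precisely the uniform lower bound $\|d_X\| \geq \underline{d}$ required by Lemma~\ref{L1.2}, so I would first invoke that lemma to produce a point $X \in B(X_0,R)$ and a subgradient $d_X \in \partial f(X)$ satisfying
\begin{align}
X - X_0 = -\frac{R}{2} \cdot \frac{d_X}{\|d_X\|}.
\end{align}
This is the candidate point at which the claimed decrease in $f$ will be exhibited. Note that hypothesis (1) is already subsumed by (2), so it is needed only to guarantee that the normalized subgradient direction is well defined.

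Next I would write the convexity (subgradient) inequality for $d_X \in \partial f(X)$ evaluated at $Y = X_0$, namely
\begin{align}
f(X_0) \geq f(X) + d_X^T \cdot (X_0 - X).
\end{align}
The entire role of the special geometry supplied by Lemma~\ref{L1.2} is that the displacement $X_0 - X$ is aligned with $d_X$: substituting $X_0 - X = \frac{R}{2}\cdot \frac{d_X}{\|d_X\|}$ collapses the inner product to
\begin{align}
d_X^T \cdot (X_0 - X) = \frac{R}{2}\cdot \frac{d_X^T d_X}{\|d_X\|} = \frac{R}{2}\cdot \|d_X\|.
\end{align}

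Finally I would chain these with the lower bound $\|d_X\| \geq \underline{d}$ from hypothesis (2) to obtain
\begin{align}
f(X_0) \geq f(X) + \frac{R}{2}\cdot \|d_X\| \geq f(X) + \underline{d}\cdot \frac{R}{2},
\end{align}
which rearranges to $f(X) \leq f(X_0) - \underline{d}\cdot \frac{R}{2}$, as claimed. The argument is essentially immediate once Lemma~\ref{L1.2} is in hand, so the only genuine difficulty is upstream, in the fixed-point construction of the aligned point $X$ inside Lemma~\ref{L1.2}; here I merely need to confirm that the hypotheses match. I would also flag that the ball in the conclusion should read $B(X_0,R)$ rather than $B(0,R)$, since the produced point lies within distance $R/2$ of $X_0$ and need not be near the origin; the written $B(0,R)$ appears to be a typographical slip.
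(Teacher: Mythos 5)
Your proposal is correct and follows essentially the same route as the paper's own proof: invoke Lemma~\ref{L1.2} to obtain a point whose displacement from $X_0$ is aligned with a subgradient, apply the subgradient inequality at $Y = X_0$, and chain with the lower bound $\|d_X\| \geq \underline{d}$. Your observation that the conclusion should read $B(X_0,R)$ rather than $B(0,R)$ is also consistent with the paper's proof, which indeed produces the point inside $B[X_0,R]$.
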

\begin{proof}
Let $X,Y \in \mathbb{R}^n$ and let $d_X \in \partial f (X)$. Then
\begin{align}
 f(Y) \geq f(X) + d_X^T \cdot (Y - X)
\end{align} Let $Y = X_0$ to obtain:
\begin{align}
f(X) \leq f(X_0) + d_X^T \cdot (X - X_0)
\end{align}
According to Lemma \ref{L1.2} $\exists X \in B[X_0,R]$ and
 $d_X \in \partial f(X)$ such that $X - X_0 = -\frac{d_X}{\|d_X\|} \cdot \frac{R}{2}$ hence 
\begin{align}
f(X) & \leq f(X_0) - \frac{R}{2} \cdot \frac{d_X^T \cdot d_X}{ \| d_X \|} \nonumber \\
& \leq f(X_0) - \underline{d} \cdot \frac{R}{2}
\end{align}  
\end{proof}  

\begin{rem} \label{R3} 
According to Lemma \ref{L1.3}, if $\mathcal{P} \neq \emptyset$ then 
$B\left(0, \frac{2 \cdot \| B\|}{\underline{d}} \right) \cap \mathcal{P} \neq \emptyset$. Indeed the following two alternatives exist:
\begin{enumerate}  
\item $\exists X \in B\left(0, \frac{2\cdot \|B\|}{\underline{d}} \right)$ such that $0 \in \partial f(X)$, in which case
if $\mathcal{P}$ is feasible follows that $X \in \mathcal{P}$. 
\item $\not \exists X \in B\left(0, \frac{2\cdot \|B\|}{\underline{d}} \right)$ such that $0 \in \partial f(X)$. Therefore
using Lemma \ref{L1.3} one obtains that $\exists X \in B\left(0, \frac{2\cdot \|B\|}{\underline{d}} \right)$ such that
\begin{align} 
 f(X) \leq f(0) - \frac{\underline{d} \cdot R}{2} \leq \| B\| - \|B\| = 0
\end{align} hence $X \in \mathcal{P}$
\end{enumerate} 
\end{rem}

In case the reader is interested in the linear programming algorithm only, he (she) can skip the following four subsections. 
\subsection{Minimization of continuous convex functions}
In this section we will focus on finding the unconstraint minimum of continuous convex functions. In case more information about the 
function is known, like
\begin{enumerate} 
\item a minimizer exists
\item a lower bound on the minimum value 
\item a maximum distance to the global optimizer from a given initial point
\end{enumerate} we are able to configure the algorithm to converge in one metastep. We give complexity results for such a case. 
However, for the situations were such assumptions cannot be made about the function to be minimized the proposed algorithm can still be used
to search for the minimum in any desired/affordable number of metasteps. Each metastep has a finite runtime and a known worst 
case complexity. The algorithm is able to recognize if one of the metasteps terminated with the finding of the global minimum and stops. 
Otherwise, if stopped after a number of metasteps, the algorithm can give a certificate that the global minimizer has not been found.   

\begin{rem} Please note that the requirement of a lower bound and a maximum distance to the global optimizer is more general than the requirement
of the maximum distance to the global optimizer and Lipschitz continuity of $f$. Indeed, given $X_0$ and $\| X_0 - X^{\star}\| \leq R$ one already
 obtains $ | f(X^{\star}) - f(X_0)| \leq G \cdot |X_0 - X^{\star}|$ hence $-G\cdot R + f(X_0) \leq f(X^{\star})$ therefore $f$ is bounded below.
Therefore, in this regard, our algorithm is superior to those presented in \cite{lecS}, \cite{lecM}.
\end{rem}

In the following we present a series of simple lemmas that will be used later in the section.

\begin{lemma} \label{L2.1}
Let $\mathcal{C}\subseteq \mathbb{R}^n$ be a convex set and $X,Y \in \mathbb{R}^n$ with $X \in \mathcal{C}$. We consider the halfspace 
$\mathcal{H}(Y, H)$ for some $H \in \mathbb{R}^n$. Then 
\begin{align}
\mathcal{C} \cap \mathcal{H}(Y,H) \neq \emptyset \iff \left( \mathcal{C} \cap \mathcal{H}(X,H)\right) \cap \mathcal{H}(Y,H) \neq \emptyset
\end{align}
\end{lemma}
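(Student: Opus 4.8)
The plan is to read $\mathcal{H}(P,H)$ as the halfspace $\{Z \in \mathbb{R}^n : H^T(Z-P) \le 0\}$ determined by the normal $H$ and passing through the point $P$, as the surrounding ellipsoid-method context suggests. Under this convention the two halfspaces in the statement, $\mathcal{H}(X,H)$ and $\mathcal{H}(Y,H)$, are \emph{parallel}: they share the normal $H$ and differ only by a translation along that direction. With this pinned down I would split the equivalence into its two implications, the reverse one being immediate and the forward one carrying all the content.

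For the reverse implication I would simply note the set inclusion $\bigl(\mathcal{C}\cap\mathcal{H}(X,H)\bigr)\cap\mathcal{H}(Y,H)\subseteq \mathcal{C}\cap\mathcal{H}(Y,H)$, so that nonemptiness of the smaller intersection forces nonemptiness of the larger one. Nothing beyond the definitions is needed here.

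The forward implication is where I would do the work. Assuming $\mathcal{C}\cap\mathcal{H}(Y,H)\neq\emptyset$, I would fix a witness $Z\in\mathcal{C}$ with $H^T(Z-Y)\le 0$ and produce a point lying in all three of $\mathcal{C}$, $\mathcal{H}(X,H)$ and $\mathcal{H}(Y,H)$. I would distinguish two cases by the sign of $H^T(Z-X)$. If $H^T(Z-X)\le 0$, then $Z$ already lies in $\mathcal{H}(X,H)$ as well, so the chosen witness settles the claim. If instead $H^T(Z-X)>0$, I would fall back on the given point $X\in\mathcal{C}$: it trivially satisfies $H^T(X-X)=0\le 0$, hence $X\in\mathcal{H}(X,H)$, while the decomposition $H^T(X-Y)=H^T(X-Z)+H^T(Z-Y)<0$, whose first summand equals $-H^T(Z-X)<0$ and whose second is $\le 0$, shows $X\in\mathcal{H}(Y,H)$ too. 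Either way the triple intersection is nonempty. As an alternative route, since $\mathcal{C}$ is convex I could instead argue along the segment $[X,Z]\subseteq\mathcal{C}$ and invoke the intermediate value theorem on the affine map $t\mapsto H^T((1-t)X+tZ-Y)$; but the case split above is more elementary and, notably, does not even require convexity of $\mathcal{C}$ for this direction.

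The step I expect to be most delicate is not any computation but the bookkeeping of the halfspace convention: one must be sure that $\mathcal{H}(X,H)$ and $\mathcal{H}(Y,H)$ use the \emph{same} normal $H$, so that they are nested along the direction $H$, and one must see clearly where the hypothesis $X\in\mathcal{C}$ is genuinely used, namely to supply a feasible fallback point when $Z$ escapes $\mathcal{H}(X,H)$. Once the convention is fixed, the argument reduces to a short sign check.
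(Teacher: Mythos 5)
Your proof is correct and is essentially the paper's argument in contrapositive form: the paper splits on whether $X \in \mathcal{H}(Y,H)$ (if so, $X$ witnesses both sides; if not, it shows the half of $\mathcal{C}$ on the far side of the hyperplane through $X$ cannot meet $\mathcal{H}(Y,H)$, so any witness of the left side automatically lies in $\mathcal{H}(X,H)$), while you split on whether the witness $Z$ lies in $\mathcal{H}(X,H)$ and otherwise fall back to $X$ — the same decomposition $H^T(X-Y)=H^T(X-Z)+H^T(Z-Y)$ drives both cases. Both arguments use only the hypothesis $X\in\mathcal{C}$ and never the convexity of $\mathcal{C}$, as you correctly note.
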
 
\begin{proof} First let us notice that 
\begin{align}
\mathcal{C} = \left(\mathcal{C} \cap \mathcal{H}(X,-H) \right) \bigcup \left(\mathcal{C} \cap \mathcal{H}(X,H) \right)
\end{align}

We distinguish two cases:
\begin{enumerate}
\item $X \in \mathcal{H}(Y,H)$. It follows that $X \in \left( \mathcal{C} \cap \mathcal{H}(X,H) \cap \mathcal{H}(Y,H) \right) \neq \emptyset$ The 
conclusion is obvious here.
\item $X \not\in \mathcal{H}(Y,H)$ hence $H^T \cdot (X - Y) < 0$. Let $Z \in \mathcal{C} \cap \mathcal{H}(X, -H)$. Then 
\begin{align}
(Z - Y)^T\cdot H = (Z - X)^T \cdot H + (X - Y)^T \cdot H < 0
\end{align} therefore $\left( \mathcal{C} \cap \mathcal{H}(X, -H) \right) \cap \mathcal{H}(Y,H) = \emptyset$ hence the conclusion follows.
\end{enumerate}
\end{proof}

\begin{lemma} \label{L2.3}
Let $f : \mathbb{R}^n \to \mathbb{R}$ be a continuous convex function and $\mathcal{C} \subseteq \mathbb{R}^n$ be a convex set. We denote  
\begin{align}
X^{\star} = \mathop{\text{argmin}}_{X \in \mathcal{C}} \hspace{0.1cm} f(X)
\end{align} and assume that $X^{\star} \in \text{int} (\mathcal{C})$. Next let $\mathcal{D} \subseteq \mathcal{C}$ be a convex subset and
\begin{align}
Y^{\star} = \mathop{\text{argmin}}_{Y \in \mathcal{D}} \hspace{0.1cm} f(Y)
\end{align} If $Y^{\star} \in \text{int} (\mathcal{D}) $ then $f(Y^{\star}) = f(X^{\star})$
\end{lemma}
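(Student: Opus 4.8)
The plan is to reduce the claim to a single, more fundamental observation: for a convex function, any minimizer lying in the \emph{interior} of its feasible convex set is automatically a global minimizer over all of $\mathbb{R}^n$. Granting this, both $X^{\star}$ and $Y^{\star}$ become global minimizers of the same function $f$, so they must share the common optimal value, giving $f(X^{\star}) = f(Y^{\star})$ at once. Note that the containment $\mathcal{D} \subseteq \mathcal{C}$ already yields the easy inequality $f(Y^{\star}) \geq f(X^{\star})$, so the real content is the reverse inequality, and it too falls out of the interior-minimizer observation.

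First I would establish the key claim. Let $Z \in \text{int}(\mathcal{S})$ be a minimizer of $f$ over a convex set $\mathcal{S}$, and suppose toward a contradiction that $Z$ is not a global minimizer, i.e.\ there exists $W \in \mathbb{R}^n$ with $f(W) < f(Z)$. Consider the segment points $Z_t = (1-t) \cdot Z + t \cdot W$ for $t \in (0,1]$. Convexity of $f$ gives
\begin{align}
f(Z_t) \leq (1-t) \cdot f(Z) + t \cdot f(W) < f(Z).
\end{align}
Since $Z \in \text{int}(\mathcal{S})$, there is $\epsilon > 0$ with $B(Z,\epsilon) \subseteq \mathcal{S}$, and for $t$ small enough $Z_t \in B(Z,\epsilon) \subseteq \mathcal{S}$. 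But then $f(Z_t) < f(Z)$ with $Z_t \in \mathcal{S}$ contradicts the minimality of $Z$ over $\mathcal{S}$. Hence $f(Z) \leq f(W)$ for every $W \in \mathbb{R}^n$, that is, $Z$ is a global minimizer. Equivalently, one may phrase this through subgradients: interiority forces $0 \in \partial f(Z)$, which for convex $f$ is exactly the condition characterizing a global minimizer.

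Then I would apply this claim twice. Since $X^{\star} \in \text{int}(\mathcal{C})$ minimizes $f$ over $\mathcal{C}$, it is a global minimizer, so $f(X^{\star}) = \min_{X \in \mathbb{R}^n} f(X)$. Since $Y^{\star} \in \text{int}(\mathcal{D})$ minimizes $f$ over $\mathcal{D}$, the identical argument gives $f(Y^{\star}) = \min_{X \in \mathbb{R}^n} f(X)$. Comparing the two yields $f(X^{\star}) = f(Y^{\star})$, as required.

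The only delicate point, and the one I would flag as the heart of the argument, is the interior-minimizer step: it is precisely interiority that lets us move a little in \emph{every} direction while staying feasible, so that a descent direction guaranteed by $f(W) < f(Z)$ actually produces a feasible point of strictly smaller value. Without interiority (for instance if the minimizer sat on the boundary) the descent direction could point out of $\mathcal{S}$ and the conclusion would fail, which is why the hypotheses $X^{\star} \in \text{int}(\mathcal{C})$ and $Y^{\star} \in \text{int}(\mathcal{D})$ are indispensable rather than cosmetic.
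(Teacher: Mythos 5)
Your proof is correct and follows essentially the same route as the paper's: both hinge on the fact that an interior (hence local) minimizer of a convex function is a global minimizer, with your segment argument merely filling in the details the paper leaves implicit. One small side remark: your claim that the hypothesis $X^{\star} \in \text{int}(\mathcal{C})$ is indispensable is not quite right --- since $\mathcal{D} \subseteq \mathcal{C}$ gives $f(X^{\star}) \leq f(Y^{\star})$ for free, the interiority of $Y^{\star}$ alone already yields the conclusion, so that hypothesis is in fact redundant.
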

\begin{proof} Since $f$ is convex it has only one global minimum value. If $Y^{\star} \in \text{int} (\mathcal{D} )$ then 
$\exists \mathcal{B}(Y^{\star}, \epsilon)$ such that $f(Y) \geq f(Y^{\star})$ for all $Y$ in such a ball, hence
the conclusion follows.
\end{proof}

\begin{lemma} Let 
\begin{align}
\mathcal{E} &= \left \{  X \in \mathbb{R}^n | (X - X_c)^T \cdot A^T \cdot A \cdot (X - X_c) \leq 1\right \} \nonumber \\
		&= \left\{ A^{-1}\cdot U + X_c | \| U \| \leq 1 \right\}  
\end{align} denote an ellipsoid, and $\mathcal{H}(X_0, H) = \{ X | H^T \cdot (X-X_0) \leq 0\}$ denote a half space. Then 
\begin{align} 
\mathcal{E} \cap \mathcal{H} \neq \emptyset \iff \frac{| H^T \cdot (X_c - X_0)|}{\| H^T \cdot A^{-1}\|} < 1
\end{align}
\end{lemma}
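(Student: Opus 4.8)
The plan is to exploit the second, parametric description of the ellipsoid, $\mathcal{E} = \{A^{-1}U + X_c : \|U\| \le 1\}$, which turns the geometric intersection question into the solvability of a single linear inequality over the Euclidean unit ball. Writing a generic point of $\mathcal{E}$ as $X = A^{-1}U + X_c$ with $\|U\| \le 1$, membership in the halfspace $\mathcal{H}(X_0,H)$ reads
\begin{align}
H^T(X - X_0) = H^T A^{-1} U + H^T(X_c - X_0) \le 0. \nonumber
\end{align}
Hence $\mathcal{E} \cap \mathcal{H} \ne \emptyset$ if and only if there is some $U$ with $\|U\| \le 1$ realizing this inequality, i.e. if and only if the minimum of the affine functional $U \mapsto H^T A^{-1} U + H^T(X_c - X_0)$ over the closed unit ball is nonpositive.

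The core step is to evaluate that minimum. Setting $v = A^{-T} H$ so that $H^T A^{-1} U = v^T U$, the Cauchy--Schwarz inequality gives $\min_{\|U\| \le 1} v^T U = -\|v\| = -\|H^T A^{-1}\|$, attained at $U = -v/\|v\|$ (this requires $H^T A^{-1} \ne 0$, which holds since $A$ is invertible and one may assume $H \ne 0$). Therefore
\begin{align}
\min_{X \in \mathcal{E}} H^T(X - X_0) = H^T(X_c - X_0) - \|H^T A^{-1}\|, \nonumber
\end{align}
and the intersection is nonempty precisely when this quantity is $\le 0$, that is, when $\dfrac{H^T(X_c - X_0)}{\|H^T A^{-1}\|} \le 1$. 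I would write both implications directly from this identity: forward, any feasible $U$ forces the minimum to be nonpositive; backward, the minimizing $U = -v/\|v\|$ produces an explicit point of $\mathcal{E} \cap \mathcal{H}$.

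The step I expect to need the most care is reconciling this clean one-sided bound with the stated symmetric condition $\bigl|H^T(X_c - X_0)\bigr| / \|H^T A^{-1}\| < 1$. The absolute value and the strict inequality are not what the bare nonemptiness of $\mathcal{E} \cap \mathcal{H}$ yields; rather they encode that the bounding hyperplane $\{X : H^T(X - X_0) = 0\}$ passes through the \emph{interior} of $\mathcal{E}$, the situation of interest for a genuine (deep) cut in the ellipsoid method. To obtain the two-sided form I would run the same support-function computation for both $H$ and $-H$: the inequality $H^T(X_c - X_0) < \|H^T A^{-1}\|$ says $\mathcal{E}$ is not contained in the complementary open halfspace, while $-H^T(X_c - X_0) < \|H^T A^{-1}\|$ says $\mathcal{E}$ is not contained in $\mathcal{H}$ itself; together they are equivalent to $|H^T(X_c - X_0)| < \|H^T A^{-1}\|$ and to the hyperplane meeting $\operatorname{int}\mathcal{E}$. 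I would therefore either prove the one-sided nonstrict statement that the computation actually supports, or restate the claim in this two-sided interior form, taking care that the choice of open versus closed halfspace fixes whether the inequality is strict.
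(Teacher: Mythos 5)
Your reduction is essentially the same as the paper's: the paper applies the affine transform $T(X) = A\cdot(X - X_c)$, which sends $\mathcal{E}$ to the unit ball and $\mathcal{H}$ to the halfspace $\{Y \mid H^T A^{-1} Y + H^T(X_c - X_0) \le 0\}$, and then "compares the distance from the origin to the hyperplane with $1$"; your parametrization $X = A^{-1}U + X_c$ is the same change of variables. The difference is in the last step, and it matters. The paper reads off the answer as a distance-to-hyperplane test, which silently conflates two distinct conditions: the halfspace meeting the ball, and the bounding hyperplane meeting the open ball. Your support-function computation, $\min_{\|U\|\le 1}\bigl(H^T A^{-1}U + H^T(X_c - X_0)\bigr) = H^T(X_c - X_0) - \|H^T A^{-1}\|$, gives the correct characterization: $\mathcal{E}\cap\mathcal{H}\neq\emptyset$ if and only if $H^T(X_c - X_0) \le \|H^T A^{-1}\|$ — one-sided and non-strict. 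The lemma as stated (two-sided, strict) is false in both directions: if $\mathcal{E}$ lies entirely inside the halfspace, so that $H^T(X_c - X_0) \le -\|H^T A^{-1}\|$, the intersection is all of $\mathcal{E}$ yet the stated condition fails; and in the tangent case $H^T(X_c - X_0) = \|H^T A^{-1}\|$ the intersection is a single point yet the strict inequality fails. What the stated condition actually characterizes is the cutting hyperplane passing through $\operatorname{int}\mathcal{E}$, which is the situation relevant to the ellipsoid-method cuts the paper uses the lemma for. So your proposal is not merely correct; it exposes that the paper's statement needs to be repaired, either to the one-sided non-strict form your computation proves, or explicitly restated as the interior-cut condition via the two support-function computations (for $H$ and $-H$) you describe. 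Your closing caution about matching open versus closed halfspaces to strict versus non-strict inequalities is exactly the bookkeeping the paper skips.
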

\begin{proof}
Let us consider the transform: $ T(X) = A^T \cdot (X - X_c)$ then
\begin{align}
T(\mathcal{E}) &= \left\{ U | \| U \| \leq 1\right\} \nonumber \\
 T(\mathcal{H}) &= \left\{ A (X - X_c) | H^T X - H^T X_0 \leq 0\right\} 
\nonumber \\
& = \left\{ Y | H^T \cdot A^{-1}\cdot Y + H^T (X_c - X_0) \leq 0 \right \}
\end{align} Since if $X \in \mathcal{E} \cap \mathcal{H} \Rightarrow T(X) \in T(\mathcal{E}) \cap T(\mathcal{H})$ we obtain
$\mathcal{E} \cap \mathcal{H} \neq \emptyset \iff T(\mathcal{E}) \cap T(\mathcal{H}) \neq \emptyset$. But checking the later is
easy since is boild down to computing the distance from the origin to a hyperplane and compare it to $1$. The distance is
\begin{align}
 \frac{| H^T \cdot (X_c - X_0)|}{\| H^T \cdot A^{-1}\|} 
\end{align}
\end{proof}

\begin{definition}
For $f : \mathbb{R}^n \to \mathbb{R}$ continuous and convex and $\mathcal{C} \subseteq \mathbb{R}^n$ a convex set, we denote by
\begin{align}
\textbf{epi}(f) = \left\{ \begin{bmatrix}X, y \end{bmatrix}^T \in \mathbb{R}^{n+1} | \hspace{0.1cm}f(X) - y \leq 0 \right\}
\end{align} the epigraph of $f$. 
\end{definition} It is easy to see that $\textbf{epi}(f)$ is a convex subset of $\mathbb{R}^{n+1}$.

\subsection{Algorithm description}
In this subsection we give a verbose description of the idea of our algorithm. The below steps are perform by any metastep of 
the algorithm. The metastep has a known, deterministic, complexity. Upon exiting one metastep another metastep can be initiated if 
the optimum has not been found yet and the maximum number of metasteps hasn't been reached yet. Each metastep is guaranteed to 
decrease the function. Below is therefore the description of one metastep: 
\begin{enumerate}
\item Let $X_0 \in \mathbb{R}^n$
\item Consider the set $\mathcal{D} = \mathcal{B}\left(\begin{bmatrix}X_0 & f(X_0)\end{bmatrix}^T, R \right) \cap \textbf{epi}(f)$
\item Consider the set $\mathcal{G}(\alpha) = \left\{ \begin{bmatrix} X, y\end{bmatrix} \in \mathbb{R}^{n+1} | y \leq \alpha \right\}$
\item Consider the set $\mathcal{S}(\alpha) = \mathcal{D} \cap \mathcal{G}(\alpha) $
\item Find the smallest $\alpha$ such that $\mathcal{S} \neq \emptyset$. 
\item Let $\alpha^{\star}$ be the smallest such $\alpha$. Then let $X_{\alpha}^{\star} \in \mathcal{B}(X_0,R)$ such that 
$\begin{bmatrix} X_{\alpha}^{\star}, \alpha^{\star} \end{bmatrix} \in \mathcal{S}$. 
\item If $X^{\star}_{\alpha} \in \text{int} \left( \mathcal{B}(X_0,R) \right) $ then according to Lemma \ref{L2.3} we declare 
$X_{\alpha}^{\star}$ the global optimum point and $\alpha^{\star}$ the global optimum value of $f$.
\item Otherwise, if $X^{\star}_{\alpha} \in \partial \mathcal{B}(X_0,R) $ then we cannot say for sure if $X^{\star}_{\alpha}$ is the 
optimum point. We conclude that the optimum point has not been found (although it still can be, but one cannot be sure unless the
 previous step is chosen)
\end{enumerate}

We will only give details in what follows about the most important among the above steps: 
" Find the smallest $\alpha$ such that $\mathcal{S} \neq \emptyset$." For this we propose the following modification of the 
ellipsoid algorithm: 
\begin{enumerate}
\item It is easy to see that $ \begin{bmatrix}X_0, f(X_0) \end{bmatrix}^T \in \mathcal{G}(f(X_0) + R) \cap \mathcal{D} \neq 
\emptyset$ while $\mathcal{G}(f(X_0)-R) \cap \mathcal{D} = \emptyset$. Indeed, for the later assume that 
$\exists \begin{bmatrix} X, y\end{bmatrix}^T \in \mathcal{D}$ such that $f(X) \leq y \leq f(X_0) - R$ therefore $R \leq f(X_0) - f(X)$.
However, because $\begin{bmatrix} X, y\end{bmatrix}^T \in \mathcal{D}$ follows that $\|X - X_0\|^2 + (f(X) - f(X_0))^2 \leq R^2$ hence
$ \| X - X_0\| = 0$. But this contradicts the fact that $f(X) \neq f(X_0)$, since $f$ being a function associates exactly one value to
any argument. 
\item Let $\bar{\alpha} = f(X_0) + R$ and $\underline{\alpha} = f(X_0)-R$. We proved that $\mathcal{S}(\bar{\alpha}) \neq \emptyset$ while
$ \mathcal{S}(\underline{\alpha}) = \emptyset$. It is now possible to find the smallest $\alpha$ such that $\mathcal{S}(\alpha)$ is feasible
by simply bisecting the interval $[\underline{\alpha}, \bar{\alpha}]$. 
\end{enumerate}

We will focus here on asserting if for a given $\alpha \in \mathbb{R}$ one has $\mathcal{S}(\alpha) = \emptyset$ or not.
We use the ellipsoid algorithm to assert the feasibility of $\mathcal{S}(\alpha)$

\begin{enumerate}
\item Consider the inital ellipsoid $\mathcal{E}_0 = \mathcal{B}\left(\begin{bmatrix}X_0 & f(X_0)\end{bmatrix}^T, R \right)$. \\ 
Obviously $ \mathcal{D} \subseteq \mathcal{E}_0 $. 
\item Let $\mathcal{E}_k$ be ellipsoid at step $k$ and let $\begin{bmatrix} X_k, y_k\end{bmatrix}$ denote the center of the ellipsoid.
 We consider three possible alternatives:
\begin{itemize}
\item $\begin{bmatrix} X_k, y_k\end{bmatrix}^T \in \mathcal{S} = \mathcal{D} \cap \mathcal{G}$ then stop and declare $\mathcal{S}$ feasible.
\item $\begin{bmatrix} X_k, y_k\end{bmatrix}^T \in \mathcal{D}(\alpha)$ but $\begin{bmatrix} X_k, y_k\end{bmatrix}^T \not\in \mathcal{G}(\alpha)$.
 Then according to Lemma \ref{L2.1} it is enough to cut the ellipsoid with the hyperplane $\{ [X, y] \in \mathbb{R}^{n+1}| y \leq y_k\}$ and 
therefore obtain the next smallest volume ellipsoid containing $\mathcal{E}_k \cap \{ [X, y] \in \mathbb{R}^{n+1}| y \leq y_k\}$. 
We have two conditions here for stopping the algorithm: 
\begin{enumerate}
\item if $\|y_k - \alpha\| \leq \epsilon$ stop and declare, with $\epsilon$ accuracy, $\mathcal{S}(\alpha)$ is feasible.
\item if $\mathcal{E}_k \cap \mathcal{G}(\alpha) = \emptyset$ stop and declare $\mathcal{S}(\alpha)$ infeasible. 
This can be easily tested according to ...
\end{enumerate} 
\item $\begin{bmatrix} X_k, y_k\end{bmatrix}^T \not \in \mathcal{D}(\alpha)$ Then we need to find $E \in \mathbb{R}^{n+1}$ such that 
$E^T \cdot \begin{bmatrix} X-X_k &y-y_k\end{bmatrix}^T \leq 0$ for all $\begin{bmatrix}X &y \end{bmatrix}^T \in \mathcal{D}(\alpha)$,
 that is a separating hyperlane. Please note that because $f$ is a convex function, follows that 
$f(X) \geq f(X_k) + d_{X_k}^T \cdot (X - X_k)$ where $d_{X_k} \in \partial f(X_k)$, hence \\
$\begin{bmatrix}d_X^T &-1 \end{bmatrix} \cdot \begin{bmatrix} X-X_k &f(X) - f(X_k) \end{bmatrix} \leq 0$ hence
$\begin{bmatrix}d_X^T &-1 \end{bmatrix} \cdot \begin{bmatrix} X-X_k &y - f(X_k) \end{bmatrix} \leq 0$ for all
$[X,y] \in \textbf{epi}(f)$. Therefore, let $E^T = [d_{X_k}^T, -1]^T$ be the separating hyperplane and now one can iterate to 
obtain the next minimum volume ellipsoid $\mathcal{E}_{k+1}$. One can stop the algorithm, with the conclusion that $\mathcal{S}(\alpha)$ 
is infeasible, if $\mathcal{E}_{k+1} \cap \{[X,y]| y \leq \alpha\} = \emptyset$
\end{itemize}
\end{enumerate}
\begin{rem} Because $\mathcal{D} \cap \mathcal{E}_k$ is always feasible, we need stopping criteria only for the case when the center
of the ellipsoid is in $\mathcal{D}$
\end{rem}
\begin{rem} If none of the above stopping criteria in not met, ultimately one ca rely on the standard stopping criteria: the volume of 
the ellipsoid is smaller than a certain value. However, one can eventually attempt to prove with rigor that in the presented situation  
this is not the happening. One can eventually use the fact that the center of the ellipsoid is moving closer to the cutting hyperplane.
\end{rem}

\subsection{Complexity analysis}
Given a precision $\epsilon > 0$ we ask $\log_2\left( \frac{\bar{\alpha} - \underline{\alpha}}{\epsilon}\right)$ times if $\mathcal{S}(\alpha)$ is feasible
in order to find the minimum $\alpha^{\star}$ such that $\mathcal{S}(\alpha^{\star})$ is feasible. However, for answering the feasibility
question of $\mathcal{S}(\alpha)$ one has to apply the ellipsoid algorithm. 

Let us assume that $\exists \mathcal{B}(X^{\star}, \epsilon) \in \mathcal{S}(\alpha)$ and we started with the initial ellipsoid 
$\mathcal{B}(0,R)$. Then 
\begin{align}
\frac{\text{vol}\left( \mathcal{B}(X^{\star}, \epsilon) \right) }{\text{vol}\left( \mathcal{B}(X_0, R) \right) } \leq 
\left( \frac{\epsilon}{R}\right)^{n+1}
\end{align}

It is known that 
\begin{align}
\frac{\text{vol}(\mathcal{E}_k)}{\text{vol}(\mathcal{E}_0)} \leq e^{-\frac{k}{2(n+2)}}
\end{align} This means that if we want $\frac{\text{vol}(\mathcal{E}_k)}{\text{vol}(\mathcal{E}_0)} \leq 
\left( \frac{\epsilon}{R} \right)^{n+1}$ 
it is sufficient to have $e^{-\frac{k}{2(n+2)}} \leq \left( \frac{\epsilon}{R}\right)^{n+1}$ which means 
\begin{align}
k \geq 2\cdot (n+2)\cdot (n+1) \cdot \log \left(\frac{R}{\epsilon}\right)
\end{align}

Furthermore, each step in the ellipsoid algorithm requires $\mathcal{O}(n^2) \cdot \Omega$ flops, where $\Omega$ is the complexity required to provide a 
subgradient, hence the overall complexity of a metastep is
\begin{align}
\mathcal{O} \left( n^4 \cdot \log \left(\frac{R}{\epsilon} \right)^2 \cdot \Omega\right)
\end{align} if one takes $\bar{\alpha} = R$ and $\underline{\alpha} = -R$.

\begin{rem} The advantages of the algorithm presented in the previous subsections are costing an extra $\log\left( \frac{R}{\epsilon}\right)$
at the complexity of the metastep. This is acceptable, if the quotient of $R$ and $\epsilon$
is not very large, since it allows minimizing non-smooth continuous convex functions without the need for a priori bounds on the gradient of $f$ or
search radius. 
\end{rem}

A possible practical improvement is presented in the following subsection. 
\subsection{A merge between pattern search and deep cut ellipsoid}
In this subsection, we present a possible improvement for the ellipsoid algorithm. We observed in the previous sections that the large 
number of flops required to assert feasibility is due in part to the fact that the volume of the ellipsoid is decreasing slowly. 
A possible improvement here
is the so called deep cut ellipsoid algorithm. We give in the following a method to obtain a possible deep cut without much extra 
computation required per iteration. Assume that $X_k$ and $A_k$ describe the ellipsoid $\mathcal{E}_k$ at step $k$. 
 Assuming that we want to assert the existence of o point in the set 
$\{X | f(X) \leq 0\} \subseteq \mathcal{E}_k $ we can proceed as follows. From the current center of ellipsoid, $X_k$ we
can perform some, so called "exploratory moves/checks" in a predefined (fixed or updated) directions see \cite{patternS1}. Since we already have an 
ellipsoid one can think of using its principal axes for such checks. However, those are actually the eigenvectors of $A^T_k\cdot A_k$ and
their computation might not be justified since it increases the complexity of each step. We do not know at the moment a simple update
formula for the eigenvectors of rank one updated matrix. Nevertheless, one can always resort to simpler/fixed search directions.
Consider the columns of the unit matrix, 
$(e_p)_{p \in \{1, \hdots, n\}}$ as the "explore" directions. Therefore one checks the following points
\begin{align}
X_{k,p}^{\pm} = X_k \pm \beta \cdot e_p \hspace{1cm} p \in \{ 1, \hdots, n\}
\end{align} to see if $f(X_{k,p}^{\pm}) \leq \underline{f}_k$, where $\underline{f}_k$ is the lowest value of $f$ known. Of course
if $\underline{f}_k \leq 0$ the algorithm stops. 
 
According to
\cite{lecS} in such situations a "deeper" cut can be made. Indeed, let $d_{X_k} \in \partial f(X_k)$. Then 
\begin{align}
f(X) \geq f(X_k) + d_{X_k}^T \cdot (X - X_k)
\end{align} for all $X\in \mathbb{R}^n$. But because we search for $f(X) \leq 0$, hence we discard those $X$ for which $f(X) \geq \underline{f}_k$.
 Therefore for those $X$ that are of interest to us to obtain:
\begin{align}
\underline{f}_k \geq f(X) \geq f(X_k) + d_{X_k}^T\cdot (X-X_k) \iff \nonumber \\
d_{X_k}^T \cdot (X - X_k) + f(X_k) - \underline{f}_k \leq 0
\end{align} where $f(X_k) - \underline{f}_k \geq 0$. There is actually an improvement if the inequality is strict.  Details about the deeps cut formulas can be found in  \cite{stanf_ellipsoid}, \cite{lecS}, \cite{laszlo}, \cite{todd}.

\section{Application to LP}
Given a system of linear inequalities, as (\ref{E2.1}), with real coefficients it would be interesting to investigate the following:
\begin{enumerate} 
\item if it is possible to assert its feasibility in a number of flops bounded above by a polynomial in $m,n$
\item if feasible, find a solution (since answering the first question should not necessary mean that a solution was found! )
\end{enumerate}
\begin{rem} For the first question, in the case the coefficients are integers there is already well known the fact that assertion of the 
feasibility can be done in so called "weakly" polynomial time \cite{ref1_2011}, \cite{lecCor}, i.e the polynomial bounding above the number for operations depends on the 
problem data.  
\end{rem}
\subsection{Deciding feasibility}
For our first question, let us recall the so called Farkas lemma:
\begin{lemma}
Let us consider the linear system of inequalities:
\begin{align}
A \cdot X + B \prec 0
\end{align} where $A \in \mathbb{R}^{m\times n}$, $X \in \mathbb{R}^n$ and $B \in \mathbb{R}^m$ with $m\geq n$ positive integers. Then exactly
one of the following two is true:
\begin{enumerate}
\item $\exists X \in \mathbb{R}^n$ such that $A \cdot X + B \preceq 0$
\item $\exists y \in \mathbb{R}^m_{+}$ such that $y^T\cdot A = 0$ and $B^T \cdot y > 0$ 
\end{enumerate}
\end{lemma}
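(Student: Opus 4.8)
The plan is to prove the dichotomy in two halves: that the two alternatives are mutually exclusive (\emph{at most one} can hold) and that they are exhaustive (\emph{at least one} must hold); together these give ``exactly one.'' The exclusivity half is routine and I would dispatch it first. Suppose both alternatives held simultaneously, so there is an $X$ with $A\cdot X + B \preceq 0$ and a $y \in \mathbb{R}^m_+$ with $y^T\cdot A = 0$ and $B^T\cdot y > 0$. Left-multiplying the inequality $A\cdot X + B \preceq 0$ by the nonnegative vector $y^T$ preserves its sense, giving $y^T\cdot A\cdot X + y^T\cdot B \leq 0$; using $y^T\cdot A = 0$ this collapses to $y^T\cdot B \leq 0$, contradicting $B^T\cdot y > 0$. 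Hence the two alternatives can never hold together.

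The substantive half is exhaustiveness: assuming alternative (1) fails, I would construct the certificate $y$ of alternative (2) by recasting feasibility as a membership question for a convex cone. Define
\[
\mathcal{C} = \left\{ A\cdot X + u \;\middle|\; X \in \mathbb{R}^n,\; u \in \mathbb{R}^m_+ \right\},
\]
the Minkowski sum of the column space of $A$ and the nonnegative orthant. A one-line check shows alternative (1) holds if and only if $-B \in \mathcal{C}$, since $A\cdot X + B \preceq 0$ is equivalent to $-B = A\cdot X + u$ for some $u \succeq 0$. Thus the failure of (1) means precisely $-B \notin \mathcal{C}$.

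Now $\mathcal{C}$ is a convex cone containing the origin, and crucially it is \emph{closed}, being a finitely generated (polyhedral) cone. Since $-B$ lies outside this closed convex set, the separating hyperplane theorem produces a vector $y \in \mathbb{R}^m$ with $y^T\cdot(-B) > \sup_{z \in \mathcal{C}} y^T\cdot z$. Because $\mathcal{C}$ is a cone, that supremum must be $0$ (a strictly positive functional value could be scaled to $+\infty$), so $y^T\cdot z \leq 0$ for all $z \in \mathcal{C}$ while $-y^T\cdot B > 0$. I would then read off the structure of $y$ from the two pieces of $\mathcal{C}$: the subspace $\mathrm{range}(A) \subseteq \mathcal{C}$ forces $y^T\cdot A = 0$ (a functional bounded above on a subspace must vanish on it), and the generators of $\mathbb{R}^m_+ \subseteq \mathcal{C}$ force $y \preceq 0$. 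Setting $\tilde{y} = -y$ then gives $\tilde{y} \in \mathbb{R}^m_+$, $\tilde{y}^T\cdot A = 0$ and $\tilde{y}^T\cdot B > 0$, which is exactly alternative (2).

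The main obstacle is the closedness of $\mathcal{C}$, on which the \emph{strict} separation hinges: separating a point from a merely convex (not closed) set need not be strict, and without strictness one obtains only $B^T\cdot y \geq 0$ rather than the required $B^T\cdot y > 0$. This is the one genuinely non-trivial ingredient, and I would handle it either by invoking the Minkowski--Weyl theorem (every finitely generated cone is closed) or, for a self-contained route, by replacing the separation argument with Fourier--Motzkin elimination, which proves the alternative by inductively eliminating the coordinates of $X$ while tracking the accumulating nonnegative multipliers.
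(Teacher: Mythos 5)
The paper itself offers no proof of this lemma: it is introduced with ``let us recall the so called Farkas lemma'' and used as a black box, so there is no internal argument to compare yours against. Your proof is correct, and it is the classical separation proof. The exclusivity half (left-multiply $A\cdot X + B \preceq 0$ by $y^T \succeq 0$ and use $y^T\cdot A = 0$ to get $y^T\cdot B \leq 0$, contradicting $B^T\cdot y > 0$) is airtight. The exhaustiveness half is the standard conic argument: the reformulation of alternative (1) as $-B \in \mathcal{C}$ with $\mathcal{C} = \{A\cdot X + u \mid X \in \mathbb{R}^n,\ u \succeq 0\}$ is right; the reduction of $\sup_{z\in\mathcal{C}} y^T\cdot z$ to $0$ (a cone admits no positive finite supremum of a linear functional, and $0\in\mathcal{C}$) is right; and reading off $y^T\cdot A = 0$ from the subspace $\mathrm{range}(A)\subseteq\mathcal{C}$ and $y \preceq 0$ from the generators $e_i \in \mathcal{C}$ is right, after which $\tilde{y}=-y$ is exactly the certificate of alternative (2). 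You also correctly isolate the one genuinely delicate ingredient --- closedness of the finitely generated cone $\mathcal{C}$, without which strict separation (and hence the strict inequality $B^T\cdot y > 0$) fails --- and you name two legitimate ways to discharge it (Minkowski--Weyl, or Fourier--Motzkin elimination as a self-contained alternative). Two cosmetic remarks: the hypothesis $m \geq n$ in the statement is never needed, and your proof, correctly, does not use it; and the displayed system in the lemma is the strict one $A\cdot X + B \prec 0$ while the two alternatives concern the non-strict system --- your proof establishes the dichotomy of the two enumerated alternatives, which is what the statement actually asserts and what the paper later relies on.
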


We define the following convex optimization problem:
\begin{align}\label{E36}
d^{\star} &= \min q^T \cdot A \cdot A^T \cdot q \hspace{0.5cm} \text{s.t} \hspace{0.1cm}
\begin{cases} q \succeq 0\\ B^T \cdot q \geq 0\\ \textbf{1}^T \cdot q \geq 1\\ \textbf{1}^T \cdot q \leq 2\end{cases}
\end{align} The constraints are explained as follows: $q \succeq 0, B^T \cdot q > 0$ are required by Farkas lemma, $B^T \cdot q = 0$ was
added to avoid an open search space, $\textbf{1}^T \cdot q \geq 1$ was added to exclude the otherwise trivial solution $q = 0$ and finally
$\textbf{1}^T \cdot q \leq 2$ was added to make the search space bounded. 
Since the above is a convex quadratic minimization problem and the search space is closed, we can find a solution and denote it $q^{\star}$. 
We have the following alternatives:
\begin{enumerate}
\item $d^{\star} = 0$ In such a case $\exists q^{\star} \succ 0$ such that $(q^{\star})^T \cdot A = 0$. Then:
\begin{enumerate} 
\item If $B^T \cdot q^{\star} > 0$ follows that $A \cdot X + B \preceq 0$ has no solution 
\item If $B^T \cdot q^{\star} = 0$ follows that $A \cdot X + B \prec 0$ has no solution
\end{enumerate}
\item $d^{\star} > 0$ In this case follows that $\not \exists q \succeq 0$ such that $q^T \cdot B > 0$ and $q^T \cdot A = 0$ since 
in such a case $\frac{q}{1^T \cdot q} $ belongs to the search space and would vanish the objective function dropping it strictly below its 
computed minimum. From here it is concluded using Farkas's lemma, that $\exists X$ such that $A \cdot X + B \preceq 0$. 
\end{enumerate}

It is obvious that we can assert the feasibility of $A \cdot X + B \preceq 0$ (with the above details) just by solving the above convex
optimization problem. It is important to see that the search space is include in the ball $\mathcal{B}(0, 2 \cdot \sqrt{m})$ regardless 
of $A,B$. Using our above proposed algorithm (or the general ellipsoid method as well) one can obtain a polynomial upper bound 
on the number of flops, required to assert the feasibility of the linear program, with no dependency on the problem data. The problem data 
is, of course, involved (in one way or another) in the computation but is not affecting the number of flops. As we have shown above,
 the computation complexity for the ellipsoid method depends on $n,m,R,\epsilon$. In this case $R = 2 \cdot \sqrt{m}$. 
\subsection{Search for a feasible point}
For our second question, assuming the $LP$ is feasible we want to find a solution. We recall the results from Remark (\ref{R3}). Also, we will consider that $\|A_k\| \leq 1$ and $|b_k| \leq 1$. If this is not the case one can simply 
multiply the inequality by $\frac{1}{\sqrt{\|A_k\|^2 + b_k^2}}$. Therefore $\|B\| \leq \sqrt{m}$ and hence
\begin{align}
\mathcal{B}\left(0,\frac{2\cdot \sqrt{m}}{\underline{d}} \right) \bigcap \mathcal{P} \neq \emptyset
\end{align} 
Recall 
\begin{align}
f(X) = \max_{k \in \{1, \hdots, m\}} \left( A_k^T \cdot X + b_k \right)
\end{align}

Let for $X \in \mathbb{R}^n$ with $f(X) > 0$. Being interested in $\partial f(X)$ we define, \cite{lecSub}
\begin{align}
\mathcal{A}(X) &= \{ k \in \{1, \hdots, m\} | A_k^T \cdot X + b_k = f(X)\} \nonumber \\
\partial f(X) &= \left\{ \sum_{k \in \mathcal{A}(X)} \alpha_k \cdot A_k | \alpha_k \geq 0, \sum_{k \in \mathcal{A}(X)} \alpha_k = 1 \right \}
\end{align}

The main focus on the rest of the paper will be to investigate $\underline{d}$ i.e a lower bound on the magnitude of the subgradients for 
the points $X$ with $f(x) > 0$. We have a first result in this direction:
\begin{lemma} For a feasible system, if $f(X) > 0$, then $\| d_X\| > 0$ for all $d_X \in \partial f(X)$
\end{lemma}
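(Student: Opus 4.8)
The plan is to argue through the standard first-order optimality characterization for convex functions, namely that $0 \in \partial f(X)$ if and only if $X$ globally minimizes $f$. Since the assertion ``$\|d_X\| > 0$ for every $d_X \in \partial f(X)$'' is precisely the statement that $0 \notin \partial f(X)$, it suffices to rule out $0 \in \partial f(X)$ at any point $X$ with $f(X) > 0$, under the standing hypothesis that the system is feasible.

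First I would record Fermat's rule for subdifferentials: for a convex $f : \mathbb{R}^n \to \mathbb{R}$ one has $0 \in \partial f(X)$ exactly when $f(X) = \min_{Y \in \mathbb{R}^n} f(Y) = f(X^{\star})$. This is immediate from the subgradient inequality $f(Y) \geq f(X) + d_X^T \cdot (Y - X)$ already used in the proof of Lemma \ref{L1.3}: taking $d_X = 0$ yields $f(Y) \geq f(X)$ for all $Y$, so $X$ is a global minimizer, and conversely a global minimizer admits $0$ as a subgradient.

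Next I would invoke Lemma \ref{L1.1}, which states that $\mathcal{P} \neq \emptyset$ is equivalent to $f(X^{\star}) \leq 0$. Combining the two facts, suppose for contradiction that at some $X$ with $f(X) > 0$ there is a subgradient $d_X \in \partial f(X)$ with $\|d_X\| = 0$, i.e. $0 \in \partial f(X)$. Then $X$ is a global minimizer, so $f(X) = f(X^{\star})$; but feasibility forces $f(X^{\star}) \leq 0$, whence $f(X) \leq 0$, contradicting $f(X) > 0$. Hence no zero subgradient can occur at such a point, and $\|d_X\| > 0$ for all $d_X \in \partial f(X)$.

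I do not anticipate a genuine obstacle: the result follows directly from Fermat's rule together with Lemma \ref{L1.1}, and the explicit active-set description of $\partial f(X)$ is not even needed. The only place a subtlety could hide — and the natural hard part of the broader program — is the gap between the qualitative ``$\|d_X\| > 0$'' proved here and the quantitative uniform bound ``$\|d_X\| \geq \underline{d} > 0$'' that the radius estimate in Remark \ref{R3} actually requires; establishing such a uniform $\underline{d}$ over a ball would demand a separate argument exploiting the finiteness of the possible active sets $\mathcal{A}(X)$ together with a compactness or combinatorial estimate, rather than the one-line pointwise contradiction used above.
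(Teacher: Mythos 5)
Your proof is correct, but it follows a genuinely different route from the paper's. The paper works with the explicit active-set description of the subdifferential: assuming a zero subgradient at a point with $f(X) > 0$, it takes the vanishing convex combination $\sum_{k \in \mathcal{A}(X)} \alpha_k \cdot A_k = 0$, extends the multipliers by zeros to a vector $q \in \mathbb{R}^m_{+}$, and verifies that $q^T \cdot A = 0$ while $q^T \cdot B = \sum_{k \in \mathcal{A}(X)} \alpha_k \cdot b_k = f(X) > 0$; by Farkas's lemma this $q$ is a certificate of infeasibility, contradicting the hypothesis. You instead combine Fermat's rule with Lemma \ref{L1.1}: a zero subgradient makes $X$ a global minimizer, and feasibility forces the global minimum to be nonpositive, contradicting $f(X) > 0$. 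Your route is shorter and strictly more general --- it uses nothing about the piecewise-linear structure of $f$ and applies verbatim to any convex function whose nonpositivity characterizes feasibility --- whereas the paper's route produces an explicit dual object (the Farkas certificate), which foreshadows the quantitative machinery of the same section, where lower bounds $\underline{d}$ on subgradient norms are sought by minimizing $\Lambda^T \cdot A \cdot A^T \cdot \Lambda$ over exactly such multiplier vectors, cf.\ (\ref{E42}). One small point: invoking Lemma \ref{L1.1} tacitly presupposes that the minimizer $X^{\star}$ exists; this is harmless here, since under your assumption $X$ itself is a global minimizer, so you can argue directly that $f(X) \leq f(Y) \leq 0$ for any feasible $Y$, with no reference to $X^{\star}$. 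Your closing observation about the gap between the pointwise conclusion $\|d_X\| > 0$ and the uniform bound $\underline{d} > 0$ needed in Remark \ref{R3} is also on target: the paper itself concedes it could not supply a computation of $\underline{d}$ and resorts to additional assumptions for that step.
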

\begin{proof}
 Indeed, let $f(X) > 0$ and assume that exists $\alpha_k \geq 0$ with $\sum_{k \in \mathcal{A}(X)} \alpha_k = 1$ such that 
\begin{align}
\sum_{k \in \mathcal{A}(X)} \alpha_k \cdot A_k = 0
\end{align} Then, $q_k = \begin{cases} \alpha_k \hspace{0.2cm} &k \in \mathcal{A}(X) \\ 0 & k \not \in \mathcal{A}(X)\end{cases}$ forms a certificate
of infeasiblity since
\begin{enumerate}
\item $q^T \cdot A = \sum_{k \in \mathcal{A}(X)} \alpha_k \cdot A_k = 0 $
\item $q^T \cdot B > 0$ since $f(X) > 0$
\begin{align}
0 &< \sum_{k=1}^m q_k \cdot (A_k^T \cdot X + b_k) = \sum_{k \in \mathcal{A}(X)}\alpha_k \cdot (A_k \cdot X + b_k) \nonumber \\
&= \sum_{k\in \mathcal{A}(X)} \alpha_k \cdot b_k = q^T \cdot B
\end{align}
\end{enumerate} However, this is a contradiction with the assumed feasibility of the system.  
\end{proof}

\begin{rem} The result above can be extended to the frontier of the feasible region if the linear system is assumed to be strictly feasible. We conclude that for all $X$ with $f(X) \geq 0$ $\exists \underline{d} > 0$ such that $\|d_X\| \geq \underline{d}$ 
\end{rem} 
\subsection{Search for a lower bound of the subgradients of infeasible points}
Unfortunately, we were not able so far to provide a computation method for the $0 < \underline{d}$, that is a strictly positive 
lower bound on the magnitude of the subgradients for the infeasible points (although we proved its existence). 
Having such a bound, one can use the results form the previous sections to provide an initial ball containing a solution. We will nevertheless 
give two methods which can be used in practice:
\begin{enumerate}
\item consider $0 < \epsilon \leq \underline{d}$ and search for a solution in $\mathcal{B} \left( 0, \frac{2\cdot \sqrt{m}}{\epsilon} \right)$. If a 
solution is found stop and return it, otherwise $\epsilon \gets \frac{\epsilon}{2}$ and reiterate. 
\item Let $\Lambda = \begin{bmatrix} \alpha_1 & \hdots 
&\alpha_m\end{bmatrix}^T \in \mathbb{R}^m$ and consider the following optimization problem for a $X$ with $f(X) \geq 0$  
\begin{align} \label{E42}
\underline{d}(X) = \min _{\begin{cases} \textbf{1}^T \cdot \Lambda = 1\\ \Lambda \succeq 0 \\ \Lambda^T \cdot (A \cdot X + B) \geq 0 \end{cases}} 
\Lambda^T \cdot A \cdot A^T \cdot \Lambda
\end{align}
Please note that for all $d_X \in \partial f(X)$ with $f(X) \geq 0$, we have 
$d_X = q_X \cdot A$ with obvious definition for $d_X$ and $q_X$. Moreover, $q_X \succeq 0$ and $q_X^T \cdot \textbf{1}^T = 1 $. Furthermore, because $0 \leq f(X) = q_X^T \cdot (A\cdot X + B)$, follows $\|d_X\| \geq \underline{d}(X)$.
The above, (\ref{E42}) is a quadratic minimization problem where $A \cdot A^T \in \mathbb{R}^{m \times m}$ is P.S.D hence the function can have 
zero values for $\Lambda \neq 0$. There are the following alternatives:
\begin{enumerate}
\item $\underline{d}(X) = 0$ then $\exists \Lambda^T \cdot A = 0$ with $0 \neq \Lambda \succeq 0$ therefore $\Lambda^T \cdot (A\cdot X + B) = \Lambda^T \cdot B \geq 0$ for $f(X) \geq 0$. It follows that the system is not strictly feasible which is a contradiction. 
\item $\underline{d}(X) > 0$ then this is actually a lower bound for all the subgradients of $f$ at the point $X$, i.e $\|d_X\| \geq \underline{d}(X)$   
\end{enumerate} 

We want to find $\underline{d}$ with $0<\underline{d} \leq \min_{\begin{cases}X \in \mathbb{R}^n\\  f(X) \geq 0 \end{cases}} \underline{d}(X)$. 
%
%
%

 Let us also define: 

\begin{align}
\underline{c} = \min _{\begin{cases} \textbf{1}^T \cdot \Lambda = 1\\ \Lambda \succeq 0 \end{cases}} 
\Lambda^T \cdot A \cdot A^T \cdot \Lambda
\end{align} Plese note that $\underline{c} \leq \underline{d}$. We have the following reasoning:
\begin{enumerate}
\item If $\underline{c} > 0$ take $\underline{d} = \underline{c}$ and conclude that the ball 
$\mathcal{B} \left(0, \frac{2\cdot \sqrt{m}}{\underline{d}} \right)$ has a feasible point.
\item If $\underline{c} = 0$ compute 
\begin{align}
\bar{b} = \max_{\begin{cases} \textbf{1}^T \cdot \Lambda = 1\\ \Lambda \succeq 0 \\ \Lambda^T \cdot A = 0 \end{cases}} 
B^T \cdot \Lambda
\end{align} Since the system is assumed to be strictly feasible, it follows from Farkas's lemma that $\bar{b} < 0$. This means that $\underline{a} = 0$
where 
\begin{align}
\underline{a}= \min _{\begin{cases} \textbf{1}^T \cdot \Lambda = 1\\ \Lambda \succeq 0 \\ \Lambda^T \cdot  B \leq -|\bar{b}| 
\end{cases}} 
\Lambda^T \cdot A \cdot A^T \cdot \Lambda
\end{align} and $\bar{a} > 0$ where 
\begin{align}
\bar{a} = \min _{\begin{cases} \textbf{1}^T \cdot \Lambda = 1\\ \Lambda \succeq 0 \\ \Lambda^T \cdot  B \geq -|\bar{b}| + \epsilon 
\end{cases}} 
\Lambda^T \cdot A \cdot A^T \cdot \Lambda
\end{align} 
\end{enumerate}
Take $\underline{d} = \bar{a}$
\end{enumerate}

%
\begin{rem}
The problem of finding $\underline{d}$ can be solved with the above proposed algorithm, after a proper forming of the epigraph. 
Moreover, the search space is limited to $B(0, \sqrt{m})$ since $0 \leq \alpha_k \leq 1$. There is therefore a polynomial (in $m$) bound 
on the number of flops required to find $\underline{d}$ regardless of $A,B$ since 
$\|A^T \cdot \Lambda\| \leq \| A\| \cdot \| \Lambda\|$ and the norm of $A$ can be 
controlled by some pre-processing.  For instance, if $\|A_k\| \leq 1$ then $\| A \|_F = Tr \left( A \cdot A^T \right) \leq 
\sum_{k=1}^m \|A_k\| \leq m$
\end{rem}

For the above proposed algorithm it is enough to take $R = \sqrt{\|B\|^2 + \left(\frac{\| B\|}{\underline{d}} \right)^2}$. 
Taking $ R \geq \sqrt{m} \cdot \sqrt{\frac{\underline{d}^2 + 1}{ \underline{d}^2}}$ is enough to be sure that the ball $B(0,R)$ contains
$\begin{bmatrix} X^{\star} \\f(X^{\star}) \end{bmatrix}$ where $f(X^{\star}) \leq 0$ hence $X^{\star} \in \mathcal{P}$. 
The feasibility of an LP can therefore be asserted within a number of flops bounded above by
\begin{align}
\mathcal{O} \left( n^4 \cdot \log \left( \frac{\sqrt{m}}{\epsilon} \cdot \sqrt{\frac{\underline{d}^2 + 1}{ \underline{d}^2}} \right)^2 \right) 
\end{align} where $\epsilon$ is the precision required for the ellipsoid algorithm and $\underline{d}$ is given above. 
%

\section{Conclusion and future work}
We presented in this paper two important topics namely minimization of continuous convex functions and we gave a new theory for the study of the feasibility of linear programs with real coefficients, \cite{ref3_1990}. There are some beautiful results, like asserting the feasibility in strongly polynomial time, but unfortunately, there are also some gaps in the presented work, especially related to the theory of linear programs. First, for future work remains the completion of the proof for our first result regarding the initial radius. Our proof uses Brouwers fixed point theorem and it is valid therefore only for the gradients of continuous differentiable functions. However, the gradients of the function we are using are not continuous (but can be approximated by the gradients of a continuous differentiable function). This proof has to be completed with the necessary rigor. Next, we were not able to give a method for bounding below the magnitude of the subgradients of infeasible points, and we had to use some assumptions involving $\epsilon$. As a future work this should be improved with an algorithm which provides an proper computation method for the lower bound of the subgradients of infeasible points and hence generates a ball to initialize the ellipsoid algorithm with, when searching for a feasible point.

\begin{acks}
This paper was supported by the project "Entrepreneurial competences and excellence research in doctoral and postdoctoral programs-ANTREDOC" project co-founded by the European Social Fund
\end{acks}

\bibliographystyle{ACM-Reference-Format}
\bibliography{sample-base}


\begin{thebibliography}{12}


\ifx \showCODEN    \undefined \def \showCODEN     #1{\unskip}     \fi
\ifx \showDOI      \undefined \def \showDOI       #1{#1}\fi
\ifx \showISBNx    \undefined \def \showISBNx     #1{\unskip}     \fi
\ifx \showISBNxiii \undefined \def \showISBNxiii  #1{\unskip}     \fi
\ifx \showISSN     \undefined \def \showISSN      #1{\unskip}     \fi
\ifx \showLCCN     \undefined \def \showLCCN      #1{\unskip}     \fi
\ifx \shownote     \undefined \def \shownote      #1{#1}          \fi
\ifx \showarticletitle \undefined \def \showarticletitle #1{#1}   \fi
\ifx \showURL      \undefined \def \showURL       {\relax}        \fi
\providecommand\bibfield[2]{#2}
\providecommand\bibinfo[2]{#2}
\providecommand\natexlab[1]{#1}
\providecommand\showeprint[2][]{arXiv:#2}

\bibitem[\protect\citeauthoryear{Boyd}{Boyd}{[n.d.]}]%
        {stanf_ellipsoid}
\bibfield{author}{\bibinfo{person}{S. Boyd}.}
  \bibinfo{year}{[n.d.]}\natexlab{}.
\newblock \bibinfo{booktitle}{\emph{EE364b, Stanford University}}.
\newblock
\urldef\tempurl%
\url{https://web.stanford.edu/class/ee364b/lectures/ellipsoid_method_slides.pdf}
\showURL{%
\tempurl}


\bibitem[\protect\citeauthoryear{Boyd}{Boyd}{2018}]%
        {lecS}
\bibfield{author}{\bibinfo{person}{S. Boyd}.} \bibinfo{year}{2018}\natexlab{}.
\newblock \bibinfo{booktitle}{\emph{Ellipsoid Method}}.
\newblock
\urldef\tempurl%
\url{https://web.stanford.edu/class/ee364b/lectures/ellipsoid_method_notes.pdf?fbclid=IwAR0CbQeOMN_hzmsBLwQ91VWFlB1njEbBGV-urQnkXarAfhzIYUyuY4eAmGo}
\showURL{%
\tempurl}


\bibitem[\protect\citeauthoryear{CAPOZZO}{CAPOZZO}{2011}]%
        {capozzo}
\bibfield{author}{\bibinfo{person}{SAMANTHA CAPOZZO}.}
  \bibinfo{year}{2011}\natexlab{}.
\newblock \bibinfo{booktitle}{\emph{A New Proof of the Ellipsoid Algorithm}}.
\newblock
\urldef\tempurl%
\url{https://www.math.ucdavis.edu/files/2813/5794/9440/CapozzoThesis.pdf}
\showURL{%
\tempurl}


\bibitem[\protect\citeauthoryear{E.~D.~Dolan and Torczon}{E.~D.~Dolan and
  Torczon}{2006}]%
        {patternS1}
\bibfield{author}{\bibinfo{person}{R.~M.~Lewis E.~D.~Dolan} {and}
  \bibinfo{person}{V. Torczon}.} \bibinfo{year}{2006}\natexlab{}.
\newblock \showarticletitle{On the Local Convergence of Pattern Search}.
\newblock \bibinfo{journal}{\emph{SIAM Journal on Optimization}}
  \bibinfo{volume}{14}, \bibinfo{number}{2} (\bibinfo{year}{2006}),
  \bibinfo{pages}{567--583}.
\newblock


\bibitem[\protect\citeauthoryear{Goemans}{Goemans}{2009}]%
        {lecM}
\bibfield{author}{\bibinfo{person}{Michel~X. Goemans}.}
  \bibinfo{year}{2009}\natexlab{}.
\newblock \bibinfo{booktitle}{\emph{Lecture notes on the ellipsoid algorithm}}.
\newblock
\urldef\tempurl%
\url{http://www-math.mit.edu/~goemans/18433S09/ellipsoid.pdf}
\showURL{%
\tempurl}


\bibitem[\protect\citeauthoryear{Gupta}{Gupta}{2017}]%
        {lecCMU}
\bibfield{author}{\bibinfo{person}{Anupam Gupta}.}
  \bibinfo{year}{2017}\natexlab{}.
\newblock \bibinfo{booktitle}{\emph{Lecture 17: The Ellipsoid Algorithm}}.
\newblock
\urldef\tempurl%
\url{https://www.cs.cmu.edu/~anupamg/advalgos17/scribes/lec16.pdf}
\showURL{%
\tempurl}


\bibitem[\protect\citeauthoryear{M.~Grötschel L.~Lovász}{M.~Grötschel
  L.~Lovász}{1988}]%
        {laszlo}
\bibfield{author}{\bibinfo{person}{A.~Schrijver M.~Grötschel L.~Lovász}.}
  \bibinfo{year}{1988}\natexlab{}.
\newblock \showarticletitle{The Ellipsoid Method}.
\newblock \bibinfo{journal}{\emph{Geometric Algorithms and Combinatorial
  Optimization}} \bibinfo{volume}{2}, \bibinfo{number}{1-3}
  (\bibinfo{year}{1988}), \bibinfo{pages}{64--101}.
\newblock


\bibitem[\protect\citeauthoryear{MEGIDDO}{MEGIDDO}{1990}]%
        {ref3_1990}
\bibfield{author}{\bibinfo{person}{NIMROD MEGIDDO}.}
  \bibinfo{year}{1990}\natexlab{}.
\newblock \showarticletitle{On Solving the Linear Programming Problem
  Approximately}.
\newblock \bibinfo{journal}{\emph{Contemp. Math.}}  \bibinfo{volume}{114}
  (\bibinfo{year}{1990}).
\newblock


\bibitem[\protect\citeauthoryear{ROBERT G.~BLAND and TODD}{ROBERT G.~BLAND and
  TODD}{1981}]%
        {todd}
\bibfield{author}{\bibinfo{person}{DONALD~GOLDFARB ROBERT G.~BLAND} {and}
  \bibinfo{person}{MICHAEL~J. TODD}.} \bibinfo{year}{1981}\natexlab{}.
\newblock \showarticletitle{The Ellipsoid Method: A Survey}.
\newblock \bibinfo{journal}{\emph{Cornell University, Ithaca, New York}}
  (\bibinfo{year}{1981}).
\newblock


\bibitem[\protect\citeauthoryear{S.~Boyd}{S.~Boyd}{2008}]%
        {lecSub}
\bibfield{author}{\bibinfo{person}{L.~Vandenberghe S.~Boyd}.}
  \bibinfo{year}{2008}\natexlab{}.
\newblock \bibinfo{booktitle}{\emph{Subgradients}}.
\newblock
\urldef\tempurl%
\url{https://see.stanford.edu/materials/lsocoee364b/01-subgradients_notes.pdf}
\showURL{%
\tempurl}


\bibitem[\protect\citeauthoryear{Syamal~K.Sen}{Syamal~K.Sen}{2011}]%
        {ref1_2011}
\bibfield{author}{\bibinfo{person}{Ravi~P.Agarwal Syamal~K.Sen,
  Suja~Ramakrishnan}.} \bibinfo{year}{2011}\natexlab{}.
\newblock \showarticletitle{Solving linear program as linear system in
  polynomial time}.
\newblock \bibinfo{journal}{\emph{Mathematical and Computer Modelling}}
  \bibinfo{volume}{53}, \bibinfo{number}{5-6} (\bibinfo{date}{March}
  \bibinfo{year}{2011}), \bibinfo{pages}{1056--1073}.
\newblock


\bibitem[\protect\citeauthoryear{Williamson}{Williamson}{2008}]%
        {lecCor}
\bibfield{author}{\bibinfo{person}{David~P. Williamson}.}
  \bibinfo{year}{2008}\natexlab{}.
\newblock \bibinfo{booktitle}{\emph{Determining the input and output size of an
  LP}}.
\newblock
\urldef\tempurl%
\url{https://people.orie.cornell.edu/dpw/orie6300/fall2008/Lectures/lec19.pdf}
\showURL{%
\tempurl}


\end{thebibliography}
\appendix

\section{Overview of the ellipsoid algorithm}

Let $S^{n}_{++}$ be the set of all positive definite symmetric matrices in $\mathbb{R}^n$, $A \in S^{n}_{++}$ and $X_c \in \mathbb{R}^n$. 
We will denote an ellipsoid in the following manner:
\begin{align}
\mathcal{E}(X_c, A) & = \left\{ X \in \mathbb{R}^n | (X - X_c)^T \cdot A^T\cdot A \cdot (X - X_c) \leq 1 \right\} \nonumber \\
& = \{ A^{-1}\cdot U + X_c | \| U \| \leq 1\}
\end{align}  We will also work with hyperplanes and halfspaces, hence given $H \in \mathbb{R}^n$ we denote by
\begin{align}
\mathcal{H}(X_c, H) = \left\{ X \in \mathbb{R}^n | H^T \cdot (X - X_c) \leq 0 \right\}
\end{align} 

Given $A_k \in \mathcal{S}^n_{++}, X_k, H \in \mathbb{R}^n$ a fundamental step in the ellipsoid algorithm requires the ability to find the smallest 
volume ellipsoid $\mathcal{E}(X_{k+1},A_{k+1})$ containing $\mathcal{E}(X_k, A_k) \cap \mathcal{H}(-H^T\cdot X_c, H)$. Iterative formulae for $A_{k+1}$ and 
$X_{k+1}$ as functions of $A_k$, $X_k$ and $H$ exist. Let 
\begin{align}
\mathcal{E}_k = \mathcal{E}(X_{k},A_{k}) & = \left\{ X |  \| A_{k} \cdot  (X - X_{k}) \| \leq 1\right\} \nonumber \\
& = \left \{ A_{k}^{-1} \cdot U + X_{k} | \|U\| \leq 1 \right \}
\end{align} 

\begin{align}
\mathcal{H}_k = \mathcal{H}(X_k,H_k) = \{ X | H^T_k \cdot (X - X_k) \leq 0 \}
\end{align}and consider the linear transform $T_1 : \mathbb{R}^n \to \mathbb{R}^n$ $T_1(X) = A_{k} \cdot (X - X_k)$ then 
\begin{align}
T_1(\mathcal{E}_{k}) = \left\{ U | \| U \| \leq 1 \right\} 
\end{align} the unit ball, and 
\begin{align}
T_1(\mathcal{H}_k) &= \left\{ A_k \cdot (X - X_c) | H^T \cdot (X - X_c) \leq 0 \right\} \nonumber \\
& = \{ Y | H^T \cdot A_k^{-1}\cdot Y \leq 0\} = \mathcal{H}(0, A_k^{-1}\cdot H_k)
\end{align} which is another halfspace. 
\begin{rem}
Under the $T_1$ transform $\mathcal{E}_k$ becomes the unit sphere, $B(0,1)$ and $\mathcal{H}_k$ becomes a halfspace through the origin, 
$\mathcal{H}(0,\hat{H}_k)$ 
\end{rem}
Next, the problem is further simplified by the use of $T_2: \mathbb{R}^n \to \mathbb{R}^n$, $T_2(X) = Q \cdot X$ where $Q \in \mathbb{R}^{n\times n}$ 
with $Q^T \cdot Q = I$ and $ Q\cdot \hat{H}_k = -\|\hat{H}_k\| \cdot e_1$, $e_1$ being the first column of the unit matrix $I$, to obtain
\begin{align}
T_2(B(0,1)) = B(0,1) \hspace{0.5cm} T_2(\mathcal{H}(0,\hat{H}_k)) &= \{ Q\cdot X | \hat{H}^T\cdot X \geq 0\} \nonumber \\
& = \{ X | e_1^T\cdot X \geq 0\} = \mathcal{H}(0,e_1)
\end{align} The solution to the simpler problem of obtaining the smallest volume ellipsoid containing $B(0,1) \cap \{ X| e_1^T \geq 0\}$ 
is known, see \cite{stanf_ellipsoid}, \cite{lecS}, \cite{capozzo}, \cite{lecCMU}, \cite{laszlo}, \cite{todd},  for a detailed derivation. \textbf{It is independent of the initial problem}. We call this ellipsoid $\mathcal{E}_U$
\begin{align}
\mathcal{E}_U &= \{ X | (X - m\cdot e_1)^T M^T \cdot M \cdot (X - m\cdot e_1) \leq 1\} \nonumber \\
& = \{ M^{-1} \cdot U + m\cdot e_1 | \|U\| \leq 1\}
\end{align} and 
\begin{align}
M^T\cdot M = \frac{n^2 - 1}{n^2} \cdot \left( I + \frac{2}{n-1} \cdot e_1 \cdot e_1^T \right) \hspace{1cm} m = \frac{1}{n+1}
\end{align} It is also proven that 
\begin{align}
\frac{\text{vol}(\mathcal{E}_U)}{\text{vol}(B(0,1))} \leq e^\frac{-1}{2(n+1)}
\end{align}
Now, the desired ellipsoid is $T_1^{-1} (T_2^{-1}(\mathcal{E}_U))$. Computing $T_2^{-1}(\mathcal{E}_U) = Q^{-1}\cdot \mathcal{E}_U$ one has:
\begin{align}
&T_2^{-1}(\mathcal{E}_U) = \{ Q^T\cdot M^{-1}\cdot U + m\cdot Q^T\cdot e_1 | \|U\| \leq 1\} \nonumber \\
& = \left\{ Q^T\cdot M^{-1} \cdot U - m\cdot \frac{\hat{H}_k}{\| \hat{H}_k \|} \biggr | \| U \| \leq 1 \right\} \nonumber \\
& = \left \{ X \biggr | \left(X + m\cdot \frac{\hat{H}}{\|\hat{H}_k\|} \right)^T Q^T \cdot M^T \cdot M \cdot Q \cdot 
\left (X + m\cdot \frac{\hat{H}}{\| \hat{H}_k\|} \right) \leq 1 \cdot \right \}
\end{align} but 
\begin{align}
Q^T \cdot M^T \cdot M \cdot Q & = Q^T \cdot\frac{n^2 - 1}{n^2} \cdot \left( I + \frac{2}{n-1} \cdot e_1 \cdot e_1^T \right) \cdot Q \nonumber \\
& = \frac{n^2 - 1}{n^2} \cdot \left( I + \frac{2}{n-1} \cdot \frac{\hat{H}_k \cdot \hat{H}_k^T}{\| \hat{H}_k\|^2 } \right) \nonumber \\
& = M_k^T \cdot M_k
\end{align} hence 
\begin{align}
T_2^{-1}(\mathcal{E}_U) &= \left \{ X \biggr | \left (X + m \frac{\hat{H}_k}{\| \hat{H}_k\|}\right )^T \cdot M_k^T  M_k \cdot 
\left (X + m \frac{\hat{H}_k}{\| \hat{H}_k\|} \right ) \leq 1 \right \} \nonumber \\
& = \left \{ M_k^{-1} \cdot U - m\cdot \frac{\hat{H}_k}{\| \hat{H}_k \|} \biggr | \| U \| \leq 1 \right \}
\end{align} Finally $T_1^{-1}(T_2^{-1}(\mathcal{E}_U))$ is computed with $T_1^{-1}(Y) = A_k^{-1}\cdot Y + X_k$
\begin{align}
T_1^{-1}(T_2^{-1}(\mathcal{E}_U)) &= \left \{ A_k^{-1} \left (M_k^{-1} \cdot U - m\cdot \frac{\hat{H}_k}{\| \hat{H}_k\|} \right ) 
+ X_k \biggr | \|U\| \leq 1 \right \} \nonumber \\
& = \left \{ \left (M_k A_k \right )^{-1} U - m  A_k^{-2}  \frac{H_k}{\| \hat{H}_k \|} +  X_k \biggr | \| U \| \leq 1 \right \}
\end{align} hence 
\begin{align}
A_{k+1} = M_k\cdot A_k \hspace{1cm} X_{k+1} = X_k - m\cdot A_k^{-2} \cdot \frac{H_k}{\| \hat{H}_k \|}
\end{align} therefore
\begin{align}
A_{k+1}^T\cdot A_{k+1} &= A_k^T\cdot M_k^T\cdot M_k \cdot A_k  \nonumber \\
 & = \frac{n^2-1}{n^2} \left( A_k^T\cdot A_k + \frac{2}{n-1} \cdot \frac{ H_k \cdot H_k^T}{\| \hat{H}_k \|^2} \right) 
\end{align} since $\hat{H}_k = A_k^{-1} \cdot H_k$ and then 
\begin{align}
X_{k+1} = X_k - \frac{1}{n+1} \left( A_k^T\cdot A_k \right)^{-1} \cdot \frac{H_k}{ \| \hat{H}_k^T\|} 
\end{align}

Please note that in both the update formulae one has to divide by $\| \hat{H}_k\| = \| A_k^{-1} \cdot H_k\|$ hence $\| \hat{H}_k\|^2 = 
H_k^T \cdot (A_k^T \cdot A_k)^{-1} \cdot H_k$. Traditionally, the Sherman-Morrison 
formula is used to iteratively obtain $(A_k^{T} \cdot A_k)^{-1}$. Indeed, from \cite{capozzo}

\begin{align}
& \left( A_{k+1}^T \cdot A_{k+1} \right)^{-1} = \nonumber \\
& \frac{n^2}{n^2 - 1} \left( \left(A_k^T\cdot A_k\right)^{-1} - 
\frac{2}{n+1}\frac{ \left( A_k^T\cdot A_k \right)^{-1} \cdot H_k \cdot H_k^T \cdot \left( A_k^T\cdot A_k \right)^{-1} }{\| A_k^{-1}\cdot H_k\|^2}\right)
\end{align}

Now given $\mathcal{E}_{k}$ by $A_k^T\cdot A_k$ and $X_k$ and a hyperplane $H_k$, it is possible to compute an ellipsoid $\mathcal{E}_{k+1}$,  
characterized by $A_{k+1}^T\cdot A_{k+1}$ and $X_{k+1}$, such that $\mathcal{E}_k \cap \mathcal{H}(X_k, H_k) \subseteq \mathcal{E}_{k+1} $ and 
$ \frac{\text{vol}\left(\mathcal{E}_{k+1} \right)}{\text{vol}\left( \mathcal{E}_k \right)} \leq e^{\frac{-1}{2\cdot (n+1)}} < 1$. 

\end{document}